\newtheorem{theorem}{Theorem}
\newtheorem{proposition}[theorem]{Proposition}
\newtheorem{corollary}[theorem]{Corollary}
\begin{document}
\onehalfspace

\title{Irregularity of Graphs respecting Degree Bounds}
\author{
Dieter Rautenbach\and Florian Werner}
\date{}

\maketitle
\vspace{-10mm}
\begin{center}
{\small 
Institute of Optimization and Operations Research, Ulm University, Ulm, Germany\\
\texttt{$\{$dieter.rautenbach,florian.werner$\}$@uni-ulm.de}
}
\end{center}

\begin{abstract}
Albertson defined the irregularity of a graph $G$ as
$irr(G)=\sum\limits_{uv\in E(G)}|d_G(u)-d_G(v)|$.
For a graph $G$ with $n$ vertices, $m$ edges, maximum degree $\Delta$,
and $d=\left\lfloor \frac{\Delta m}{\Delta n-m}\right\rfloor$, we show 
$$irr(G)\leq d(d+1)n+\frac{1}{\Delta}\left(\Delta^2-(2d+1)\Delta-d^2-d\right)m.$$
{\bf Keywords:} Irregularity; Mostar index
\end{abstract}

\section{Introduction}

In the present paper we study the {\it irregularity} $irr(G)$ 
of finite, simple, and undirected graphs $G$,
which was defined by Albertson \cite{al} as
$$irr(G)=\sum\limits_{uv\in E(G)}|d_G(u)-d_G(v)|,$$
where $d_G(u)$ denotes the degree of a vertex $u$ in $G$
and $E(G)$ denotes the edge set of $G$.
For a graph $G$ of order $n$ 
with the property that $irr(G')\leq irr(G)$
whenever $G'$ is obtained from $G$ by exactly one edge addition or deletion,
Albertson \cite{al} showed that $G$ is isomorphic to a split graph $S_{p,n-p}$,
which arises from the disjoint union of a clique $C$ of order $p$
and an independent set $I$ of order $n-p$ 
by adding all possible edges between $C$ and $I$.
From this he deduced
\begin{eqnarray}\label{eb1}
irr(G) & < & \frac{4n^3}{27},
\end{eqnarray}
which is asymptotically best possible.
Abdo, Cohen, and Dimitrov \cite{abcodi} refined (\ref{eb1}) 
to $irr(G)\leq irr(S_{p,n-p})$ for $p=\left\lfloor\frac{n}{3}\right\rfloor$.
Hansen and M\'{e}lot \cite{hame} proved a best possible upper bound 
on the irregularity of a graph with a given number $n$ of vertices and $m$ of edges;
in fact, the extremal graphs arise from some $S_{p,n-p}$ 
by adding edges that are all incident with the same vertex.
For a graph $G$ with $n$ vertices, $m$ edges, maximum degree $\Delta$, and minimum degree $\delta$,
Zhou and Luo \cite{zhlu} proved the two bounds
\begin{eqnarray}
irr(G) & \leq & m\sqrt{\frac{2n\Big(2m+(n-1)(\Delta-\delta)\Big)}{n+\Delta-\delta}-4m}\,\,\,\,\,\,\,\,\,\mbox{ and }\label{eb2}\\
irr(G) & \leq & \sqrt{m\Big(2mn(\Delta+\delta)-n^2\Delta\delta-4m^2\Big)},\label{eb3}
\end{eqnarray}
which can be satisfied with equality only if $G$ is regular or a complete bipartite graph.
Using variations of $S_{p,n-p}$, 
Abdo et al.~\cite{abcodi} provided 
lower bounds on the maximum irregularity of graphs 
of given order, maximum degree, and minimum degree.
The maximum irregularity of 
bipartite graphs \cite{hera},
graphs of bounded clique number \cite{zhlu}, and 
graphs with a given number of vertices of degree $1$ \cite{dobudaho,lichhuzh}
was also studied.

Our contribution in the present paper is the following best possible bound 
on the irregularity of graphs depending on their order, size, and maximum degree.

\begin{theorem}\label{theorem1}
Let $G$ be a graph with $n$ vertices, $m$ edges, and maximum degree at most $\Delta$,
where $\Delta$ is a positive integer.
If $d\in \{0,\ldots,\Delta-1\}$ is such that $\frac{2m}{n}\in \left[\frac{2\Delta d}{\Delta+d},\frac{2\Delta (d+1)}{\Delta+d+1}\right]$, then
\begin{eqnarray}\label{e1}
irr(G) \leq d(d+1)n+\frac{1}{\Delta}\left(\Delta^2-(2d+1)\Delta-d^2-d\right)m.
\end{eqnarray}
\end{theorem}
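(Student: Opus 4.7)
The strategy is to reduce the theorem to a pointwise inequality for each edge that, after summing over $E(G)$, telescopes into the desired bound. Writing $c:=\frac{\Delta^2-(2d+1)\Delta-d(d+1)}{\Delta}$, my plan is to establish the following \emph{edge-wise inequality}: for every edge $uv\in E(G)$, with $a:=d_G(u)$ and $b:=d_G(v)$,
$$|a-b|\;\leq\;\frac{d(d+1)}{a}+\frac{d(d+1)}{b}+c.$$
The particular shape is chosen so that the reciprocal terms cancel the degree weighting that arises from summing over edges, leaving a clean vertex count, while the constant $c$ is calibrated to make the inequality tight at the extremal biregular configurations.

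Summing the edge-wise inequality over all edges immediately yields the theorem. The constant term contributes $cm$, and for each vertex $v$ with $d_G(v)>0$ the reciprocal contributions collapse via $d_G(v)\cdot\frac{d(d+1)}{d_G(v)}=d(d+1)$, so
$$\sum_{uv\in E(G)}\Bigl(\tfrac{d(d+1)}{a}+\tfrac{d(d+1)}{b}\Bigr) \;=\; d(d+1)\cdot \bigl|\{v\in V(G): d_G(v)>0\}\bigr| \;\leq\; d(d+1)\,n,$$
and adding $cm$ recovers exactly the right-hand side of (\ref{e1}). Interestingly, the hypothesis linking $d$ to $2m/n$ does not actually enter this derivation; it merely designates the value of $d$ for which this linear upper bound in $n$ and $m$ is tightest for the given $G$.

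The real work is in proving the edge-wise inequality. Assume without loss of generality that $a\leq b$, and set
$$h(a,b):=\tfrac{d(d+1)}{a}+\tfrac{d(d+1)}{b}+c-(b-a).$$
Since $\partial h/\partial b=-d(d+1)/b^2-1<0$, the function $h(a,\cdot)$ is strictly decreasing in $b$, so its minimum over $b\in[a,\Delta]$ is attained at $b=\Delta$. Using the identity $\frac{d(d+1)}{\Delta}+c=\Delta-(2d+1)$, a short computation gives
$$h(a,\Delta)\;=\;\tfrac{d(d+1)}{a}+a-(2d+1)\;=\;\tfrac{(a-d)(a-d-1)}{a}.$$
The decisive observation is that $a$ is a positive integer: the factors $a-d$ and $a-d-1$ are then consecutive integers, so either both are nonnegative (when $a\geq d+1$) or both are nonpositive (when $a\leq d$), and in either case $(a-d)(a-d-1)\geq 0$. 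Hence $h(a,\Delta)\geq 0$, which gives $h(a,b)\geq 0$ for all $b\leq\Delta$. This integrality step is the main obstacle: $h(a,\Delta)$ dips below zero for real $a\in(d,d+1)$, so the argument genuinely needs the degrees to be integers. Equality in the edge-wise inequality occurs precisely when $\{a,b\}=\{d,\Delta\}$ or $\{a,b\}=\{d+1,\Delta\}$, matching the $(d,\Delta)$- and $(d+1,\Delta)$-biregular bipartite graphs that make (\ref{e1}) tight.
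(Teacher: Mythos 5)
Your proof is correct and is essentially the paper's argument: your edge-wise inequality $|a-b|\leq \frac{d(d+1)}{a}+\frac{d(d+1)}{b}+c$ is exactly the dual constraint $z_a+z_b\geq |a-b|$ for the paper's feasible dual solution $x=d(d+1)$, $y=c/2$, $z_i=\frac{x}{i}+y$ of its linear program, and your summation over $E(G)$ is the weak-duality inequality chain written out directly on $G$. The reduction to $b=\Delta$ by monotonicity, the key verification $\frac{(a-d)(a-d-1)}{a}\geq 0$ via integrality of the degrees, and the observation that the hypothesis on $\frac{2m}{n}$ is not needed for validity of the bound all coincide with the paper's proof.
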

The complete bipartite graphs $K_{\Delta,d}$ and $K_{\Delta,d+1}$ 
as well as disjoint unions of these graphs satisfy (\ref{e1}) with equality,
which implies that Theorem \ref{theorem1} is best possible
up to terms of lower order that come from divisibility issues.
For given values of $n$, $\Delta$, and $m$, 
a suitable value of $d$ within Theorem \ref{theorem1} is 
$d=\left\lfloor \frac{\Delta m}{\Delta n-m}\right\rfloor$.
Eliminating $d$ yields the following smooth version of 
the piecewise affine bound (\ref{e1}).

\begin{corollary}\label{corollary1}
If $G$ is a graph with $n$ vertices, $m$ edges, and maximum degree at most $\Delta$,
where $\Delta$ is a positive integer, then 
$$irr(G) \leq \frac{(\Delta n-2m)\Delta m}{\Delta n-m}
<\left(3-2\sqrt{2}\right)\Delta^2n.$$
\end{corollary}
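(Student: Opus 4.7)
The plan is to apply Theorem~\ref{theorem1} with $d=\lfloor \lambda\rfloor$, where $\lambda=\frac{\Delta m}{\Delta n-m}$ is the choice of $d$ suggested just before the corollary, and then show that the resulting piecewise-affine bound never exceeds the smooth expression.

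First I would dispose of degenerate cases: if $m=0$ the statement is trivial, and if $2m=\Delta n$ then $G$ is $\Delta$-regular, so $irr(G)=0$ while the middle expression vanishes. Otherwise $0<2m<\Delta n$, so $\lambda\in(0,\Delta)$ and $d\in\{0,\ldots,\Delta-1\}$. Verifying the hypothesis of Theorem~\ref{theorem1} should then be straightforward: from $\lambda(\Delta n-m)=\Delta m$ one derives $\frac{2m}{n}=\frac{2\Delta\lambda}{\Delta+\lambda}$, and since $y\mapsto \frac{2\Delta y}{\Delta+y}$ has derivative $\frac{2\Delta^2}{(\Delta+y)^2}>0$ it is strictly increasing, so the required interval condition reduces to $d\le \lambda\le d+1$, which holds by construction.

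The main algebraic step is to compare the bound from Theorem~\ref{theorem1} with $m(\Delta-\lambda)$. Substituting $\Delta n-m=\Delta m/\lambda$ into the right-hand side of (\ref{e1}) gives
\[
d(d+1)n+\frac{1}{\Delta}\left(\Delta^2-(2d+1)\Delta-d^2-d\right)m \;=\; m\left(\frac{d(d+1)}{\lambda}+\Delta-2d-1\right),
\]
while a direct computation yields $\frac{(\Delta n-2m)\Delta m}{\Delta n-m}=m(\Delta-\lambda)$. After multiplying through by $\lambda>0$, the inequality between the two collapses to $(\lambda-d)\left(\lambda-(d+1)\right)\le 0$, which is immediate from $d\le \lambda\le d+1$. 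I expect this comparison to be the one place where one has to be patient, but the quadratic factorization keeps it clean.

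For the strict inequality $\frac{(\Delta n-2m)\Delta m}{\Delta n-m}<(3-2\sqrt{2})\Delta^2 n$, I would substitute $t=m/(\Delta n)\in\left[0,\tfrac{1}{2}\right]$, which rewrites the left-hand side as $\Delta^2 n\cdot g(t)$ with $g(t)=\frac{t(1-2t)}{1-t}$. A short calculus exercise locates the unique maximum of $g$ on this interval at $t^{\ast}=1-\frac{\sqrt{2}}{2}$, where $g(t^{\ast})=3-2\sqrt{2}$. Strict inequality then follows because $t^{\ast}$ is irrational while $t=m/(\Delta n)$ is rational for any graph $G$, so $g(t)<3-2\sqrt{2}$ automatically.
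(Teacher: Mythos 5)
Your proposal is correct and follows essentially the same route as the paper: apply Theorem~\ref{theorem1} with $d=\lfloor\lambda\rfloor$ for $\lambda=\frac{\Delta m}{\Delta n-m}$, show the resulting bound is at most $\frac{(\Delta n-2m)\Delta m}{\Delta n-m}=m(\Delta-\lambda)$ via a quadratic comparison, and then optimize that expression in $m$, with strictness coming from the irrationality of the maximizer. The only (cosmetic) difference is that the paper phrases the key comparison as convexity of the quadratic $f$ on the interval $\left[\lambda-1,\lambda\right]$ with equal endpoint values, whereas you substitute $n=\frac{m}{\Delta}+\frac{m}{\lambda}$ and factor the difference as $(\lambda-d)\left(\lambda-(d+1)\right)\leq 0$ --- these are the same quadratic identity viewed from two angles.
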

Figure \ref{fig1} 
illustrates the bounds on Theorem \ref{theorem1} and Corollary \ref{corollary1}.

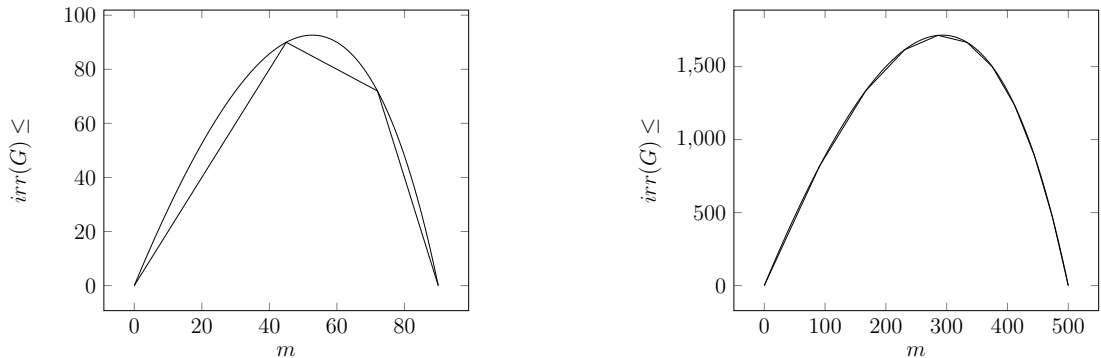
\begin{figure}[H]
    \centering
    \begin{tikzpicture}[scale=0.7]
    \def\n{60}
    \def\D{3}
    \begin{axis}[
    ylabel=$irr(G)\leq$,
    xlabel=$m$,
    y label style={at={(-0.05,0.5)}},
    samples=\D*\n/2+1,domain=0:\D*\n/2]
    \addplot[thin, mark=none ]plot (\x, {(floor(\D*\x/ (\D*\n-\x)))*((floor(\D*\x/ (\D*\n-\x)))+1)*\n+1/\D*(\D*\D-(2*(floor(\D*\x/ (\D*\n-\x)))+1)*\D-(floor(\D*\x/ (\D*\n-\x)))*(floor(\D*\x/ (\D*\n-\x)))-(floor(\D*\x/ (\D*\n-\x))))*\x});
    \addplot[thin, mark=none ]plot (\x,{((\D*\n-2*\x)*\D*\x)/(\D*\n-\x)});
    \end{axis}
    \end{tikzpicture}\hspace{2cm}
    \begin{tikzpicture}[scale=0.7]
    \def\n{100}
    \def\D{10}
    \begin{axis}[
    ylabel=$irr(G)\leq$,
    xlabel=$m$,
    y label style={at={(-0.05,0.5)}},
    samples=\D*\n/2+1,domain=0:\D*\n/2]
    \addplot[thin, mark=none ]plot (\x, {(floor(\D*\x/ (\D*\n-\x)))*((floor(\D*\x/ (\D*\n-\x)))+1)*\n+1/\D*(\D*\D-(2*(floor(\D*\x/ (\D*\n-\x)))+1)*\D-(floor(\D*\x/ (\D*\n-\x)))*(floor(\D*\x/ (\D*\n-\x)))-(floor(\D*\x/ (\D*\n-\x))))*\x});
    \addplot[thin, mark=none ]plot (\x,{((\D*\n-2*\x)*\D*\x)/(\D*\n-\x)});
    \end{axis}
    \end{tikzpicture}
\caption{The bound from (\ref{e1}) and the bound $\frac{(\Delta n-2m)\Delta m}{\Delta n-m}$ for $(n,\Delta)=(60,3)$ on the left and $(n,\Delta)=(100,10)$ on the right.}\label{fig1}
\end{figure}
Figure \ref{fig2} compares the bound from Corollary \ref{corollary1}
with the bounds provided by Zhou and Luo \cite{zhlu}.
\begin{figure}[H]
    \centering
    \begin{tikzpicture}[scale=0.7]
    \def\n{60}
    \def\D{10}
    \begin{axis}[
    ylabel=$irr(G)\leq$,
    xlabel=$m$,
    y label style={at={(-0.05,0.5)}},
    samples=\D*\n/2+1,domain=0:\D*\n/2]
    \addplot[thin, mark=none ]plot (\x,{\x*sqrt(2*\n*(2*\x+(\n-1)*\D)/(\n+\D)-4*\x)});
    \addplot[thin, mark=none ]plot (\x,{sqrt(\x*(2*\x*\n*\D-4*\x*\x))});
    \addplot[thin, mark=none ]plot (\x,{((\D*\n-2*\x)*\D*\x)/(\D*\n-\x)});
    \end{axis}
    \end{tikzpicture}
\caption{The bounds (\ref{eb2}) (top), (\ref{eb3}) (middle), and 
$\frac{(\Delta n-2m)\Delta m}{\Delta n-m}$ (bottom) for $(n,\Delta,\delta)=(60,10,0)$.
Note that a minimum degree of exactly $0$ can easily be obtained by considering extremal configurations on $n-1$ vertices and adding one isolated vertex; 
that is, requiring minimum degree $0$ should have little influence on the bounds.}\label{fig2}
\end{figure}
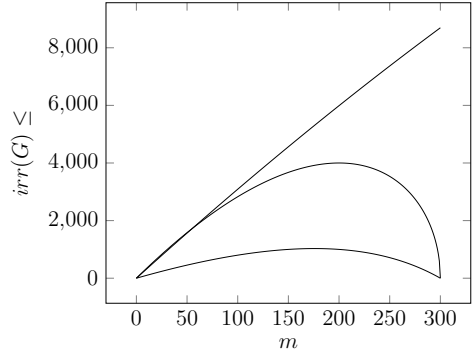
The proof of Theorem \ref{theorem1} is surprisingly simple and elegant.
It is based on a linear programming approach,
which we developed in connection with the Mostar index \cite{miparawe1,miparawe2},
a parameter that is closely related to the irregularity \cite{aldo,gaxudo,gets}.
Both parameters display a similar tradeoff:
In order to obtain a large value of the parameter, 
one needs many edges, but, if there are too many edges, 
then the average contribution per edge goes down.
Fortunately, for the setting we consider in this paper, 
linear constraints are sufficiently strong to correctly capture this tradeoff.

Exploiting this approach also leads to results involving minimum degree conditions.

For integers $\Delta>\delta\geq 0$, let 
$$\delta^*={\rm arg max}\left\{ \frac{\Delta\left(\Delta-i\right)i}{\Delta+i}:i\in \{ \delta,\ldots,\Delta\}\right\}.$$
It is easy to see that
$$\delta^*\in \Big\{
\left\lfloor\left(\sqrt{2}-1\right)\Delta\right\rfloor,
\left\lceil\left(\sqrt{2}-1\right)\Delta\right\rceil\Big\}$$
if $\delta\leq \left\lfloor\left(\sqrt{2}-1\right)\Delta\right\rfloor$ 
and $\delta^*=\delta$ otherwise.

\begin{proposition}\label{proposition1}
Let $G$ be a graph with $n$ vertices, 
maximum degree at most $\Delta$, and minimum degree at least $\delta$,
where $\Delta>\delta\geq 0$ are integers.
If $\delta^*$ is as above, then
\begin{eqnarray}\label{e2}
irr(G) \leq 
\frac{\Delta\left(\Delta-\delta^*\right)\delta^*}{\Delta+\delta^*}n.
\end{eqnarray}
\end{proposition}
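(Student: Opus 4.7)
The plan is to extend the linear-programming approach sketched for Theorem~\ref{theorem1} so that it also respects the minimum-degree condition. As a first step I would encode $G$ by its degree distribution: for $i,j \in \{\delta,\ldots,\Delta\}$, let $n_i$ denote the number of vertices of $G$ of degree $i$ and let $m_{i,j}$ denote the number of edges of $G$ with endpoints of degrees $i$ and $j$. Then
\[ \sum_i n_i = n, \qquad \sum_{j \neq i} m_{i,j} \;\leq\; i\,n_i, \qquad irr(G) = \sum_{i<j} m_{i,j}(j-i), \]
where the inequality comes from discarding the nonnegative $2 m_{i,i}$ contribution to $i n_i$. Guided by the extremal examples (disjoint unions of $K_{\Delta,\delta^*}$) and by complementary slackness in the natural LP dual, I would introduce the multipliers $\lambda_i := \mu / i$ for $i \geq 1$, where $\mu := \Delta(\Delta-\delta^*)\delta^*/(\Delta+\delta^*)$; vertices of degree $0$ (relevant only when $\delta = 0$) have no incident edges and so contribute nothing to $irr(G)$.

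Granting for a moment the dual feasibility condition
\[ \lambda_i + \lambda_j \;\geq\; j - i \qquad (1 \leq i < j \leq \Delta), \tag{$\ast$} \]
the bound then follows from a routine chain of inequalities, obtained by rearranging the $\lambda_i m_{i,j}$ and $\lambda_j m_{i,j}$ contributions endpoint by endpoint and using $\lambda_i \geq 0$:
\[ irr(G) \;\leq\; \sum_{i<j} m_{i,j}(\lambda_i + \lambda_j) \;=\; \sum_{i \geq 1} \lambda_i \sum_{j \neq i} m_{i,j} \;\leq\; \sum_{i \geq 1} \lambda_i \cdot i\,n_i \;=\; \mu \sum_{i \geq 1} n_i \;\leq\; \mu\, n. \]

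The technical core of the proof is therefore the verification of $(\ast)$, which I expect to be the main obstacle. Substituting the definition of $\lambda$, $(\ast)$ becomes
\[ \mu \;\geq\; \frac{i j (j-i)}{i+j} \qquad (1 \leq i < j \leq \Delta). \]
The main calculation I expect to carry out is to show that, for fixed $i \geq 1$, the right-hand side is non-decreasing in $j$ on $[i,\Delta]$: a brief derivative check shows that the numerator of $\tfrac{d}{dj}\tfrac{ij(j-i)}{i+j}$ is proportional to $j^2 + 2ij - i^2$, which is positive whenever $j \geq (\sqrt{2}-1)\,i$ and in particular whenever $j \geq i$. Consequently the right-hand side is maximised at $j = \Delta$, giving $\Delta i (\Delta - i)/(\Delta + i)$, whose further maximum over $i \in \{\delta,\ldots,\Delta\}$ is exactly $\mu$ by the very definition of $\delta^*$. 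This establishes $(\ast)$ and yields the desired bound.
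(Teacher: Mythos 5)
Your argument is correct and is essentially the paper's own proof: your multipliers $\lambda_i=\mu/i$ are exactly the dual solution $x=\mu$, $y=0$, $z_i=\frac{1}{i}x$ used there, your endpoint-by-endpoint rearrangement is the weak-duality chain made explicit, and your derivative check that $\frac{ij(j-i)}{i+j}$ increases in $j$ plays the same role as the paper's observation that $z_i$ is decreasing, so only $j=\Delta$ need be checked. One small imprecision: condition $(\ast)$ as displayed for all $1\le i<j\le\Delta$ can actually fail when $\delta\ge 2$ is large (e.g.\ $\Delta=10$, $\delta=\delta^*=9$, $i=4$, $j=10$), but this is harmless because you only apply $(\ast)$ --- and only verify it --- for $\delta\le i<j\le\Delta$, which suffices since $m_{i,j}=0$ otherwise; you should simply state $(\ast)$ for that restricted range.
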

The graph $K_{\Delta,\delta^*}$ as well as disjoint unions of copies of this graph
satisfy (\ref{e2}) with equality.

\begin{proposition}\label{proposition2}
Let $G$ be a graph with $n$ vertices, $m$ edges,
maximum degree at most $\Delta$, 
and minimum degree at least $\delta$,
where $\Delta>\delta\geq 1$ are integers.
If $\frac{2m}{n}\in \left[\delta,\frac{2\Delta \delta}{\Delta+\delta}\right]$, then
\begin{eqnarray}\label{e3}
irr(G) \leq & 2\Delta m-\delta\Delta n.
\end{eqnarray}
\end{proposition}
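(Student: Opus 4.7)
The plan is to derive \eqref{e3} from two elementary inequalities, both of which are linear relaxations of the degree bounds $\delta\leq d_G(v)\leq \Delta$ in the LP spirit advertised in the introduction. First, for every edge $uv\in E(G)$ I would use the minimum-degree hypothesis to observe that
\[
|d_G(u)-d_G(v)|\leq d_G(u)+d_G(v)-2\delta,
\]
which, after assuming without loss of generality $d_G(u)\leq d_G(v)$, reduces to the trivial $d_G(u)\geq \delta$. Summing over $E(G)$ and using the standard identity $\sum_{uv\in E(G)}(d_G(u)+d_G(v))=\sum_v d_G(v)^2$ yields the intermediate bound
\[
irr(G)\leq \sum_v d_G(v)^2-2\delta m.
\]

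For the second step I would exploit that each $d_G(v)$ lies in $[\delta,\Delta]$, giving the quadratic-to-linear factorization
\[
(d_G(v)-\delta)(d_G(v)-\Delta)\leq 0\;\iff\; d_G(v)^2\leq(\Delta+\delta)\,d_G(v)-\Delta\delta.
\]
Summing over $v$ and substituting $\sum_v d_G(v)=2m$ gives $\sum_v d_G(v)^2\leq 2(\Delta+\delta)m-\Delta\delta n$. Inserting this into the intermediate bound and canceling the $2\delta m$ terms then produces exactly $2\Delta m-\delta\Delta n$, as required.

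Two remarks are in order. The interval hypothesis $\tfrac{2m}{n}\in\left[\delta,\tfrac{2\Delta\delta}{\Delta+\delta}\right]$ is in fact not used in the derivation above: its lower endpoint is automatic from $\delta_G\geq\delta$, and at its upper endpoint the bound specializes to $\tfrac{\Delta(\Delta-\delta)\delta}{\Delta+\delta}n$, which joins smoothly with the bound of Proposition~\ref{proposition1} in the case $\delta^{*}=\delta$. The stated range is therefore precisely the interval on which \eqref{e3} is the tight linear piece of the piecewise-linear envelope, attained by disjoint unions of $\delta$-regular components and copies of $K_{\Delta,\delta}$. I do not anticipate any real obstacle; the argument is a two-line computation once the factorization $(d_G(v)-\delta)(d_G(v)-\Delta)\leq 0$ is in hand, and the only mild care needed is to verify the direction of the edge inequality and to track the cancellation of the $2\delta m$ term.
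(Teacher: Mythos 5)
Your proof is correct, and in fact it establishes slightly more than the proposition states: the interval hypothesis on $\frac{2m}{n}$ is never used, and, as you observe, it only delimits the range in which (\ref{e3}) is the tight piece of the piecewise-linear envelope. Your route is genuinely different from the paper's, though. The paper derives all of its results from one linear program $(P)$ whose variables count vertices by degree and edges by degree pair, and it proves Proposition \ref{proposition2} by exhibiting the explicit dual feasible solution $x=-\delta\Delta$, $y=\Delta$, $z_i=\Delta(i-\delta)/i$, so that weak duality immediately gives $irr(G)\leq nx+2my=2\Delta m-\delta\Delta n$; the only computation is the feasibility check $z_\delta+z_j-(j-\delta)=\frac{(j-\delta)(\Delta-j)}{j}\geq 0$. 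You instead chain two elementary inequalities: the per-edge bound $|d_G(u)-d_G(v)|\leq d_G(u)+d_G(v)-2\delta$, giving $irr(G)\leq\sum_v d_G(v)^2-2\delta m$, and the per-vertex factorization $(d_G(v)-\delta)(d_G(v)-\Delta)\leq 0$, giving $\sum_v d_G(v)^2\leq 2(\Delta+\delta)m-\Delta\delta n$, after which the $2\delta m$ terms cancel exactly as you say. Your argument is self-contained and arguably more transparent, detouring through the first Zagreb index $\sum_v d_G(v)^2$; what the paper's approach buys is uniformity, since the same dual-certificate template also proves Theorem \ref{theorem1} and Proposition \ref{proposition1}, and its complementary slackness conditions point toward a characterization of the extremal graphs.
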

The graph $K_{\Delta,\delta}$ and $\delta$-regular graphs
as well as disjoint unions of these graphs satisfy (\ref{e3}) with equality.
For an average degree $\frac{2m}{n}$ bigger than $\frac{2\Delta \delta}{\Delta+\delta}$,
the bound (\ref{e1}) is best possible for graphs of minimum degree at least $\delta$.

All proofs are given in the next section.

\section{Proofs}

Let $G$ be a graph with $n$ vertices, $m$ edges, and maximum degree $\Delta$,
where $\Delta$ is a positive integer.
Let $I_0=\{ 0,1,\ldots,\Delta\}$ and $I=I_0\setminus \{ 0\}$.
The irregularity of $G$ is at most the optimum value ${\rm OPT}(P)$
of the following linear programm:
$$\begin{array}{rrrcll}
& \max  & \sum\limits_{i,j\in I:i<j}(j-i)m_{i,j}&&&\\
& s.th.  & \sum\limits_{i\in I_0}n_i&=&n,&\\
(P) \,\,\,\,\,\,\,\,\, & & \sum\limits_{i\in I}in_i&=&2m,&\\
& & 2m_{i,i}+\sum\limits_{j\in I:j<i}m_{j,i}+\sum\limits_{j\in I:j>i}m_{i,j}-in_i &=&0&
\mbox{ for every $i\in I$,}\\
&   & n_i & \in & \mathbb{R}_{\geq 0}& \mbox{ for every $i\in I_0$, and}\\
&   & m_{i,j} & \in & \mathbb{R}_{\geq 0}& \mbox{ for every $i,j\in I$ with $i\leq j$}.
\end{array}$$
The variables $n_i$ and $m_{i,j}$ within $(P)$ correspond 
to the number of vertices of $G$ of degree $i$ and 
the number of edges $uv$ of $G$ with $\{ d_G(u),d_G(v)\}=\{ i,j\}$,
respectively.

The dual of $(P)$ is the following linear programm:
$$\begin{array}{rrrcll}
& \min  &  nx+2my &&&\\
& s.th.  & z_i+z_j &\geq &j-i& \mbox{ for every $i,j\in I$ with $i<j$},\\
(D) \,\,\,\,\,\,\,\,\, & & x+iy& \geq &iz_i & \mbox{ for every $i\in I$},\\
&   & x & \in & \mathbb{R}_{\geq 0}, &\\
&   & y & \in & \mathbb{R}, &\mbox{ and}\\
&   & z_i & \in & \mathbb{R}_{\geq 0} &  \mbox{ for every $i\in I$}.
\end{array}$$
For our argument, we only need weak duality 
${\rm OPT}(P)\leq {\rm OPT}(D)$ between $(P)$ and $(D)$,
captured by the following inequality chain:
\begin{eqnarray*}
&&\sum\limits_{i,j\in I:i<j}(j-i)m_{i,j}\\ & \leq & 
\sum\limits_{i,j\in I:i<j}(\underbrace{z_i+z_j}_{\geq j-i})m_{i,j}
+\underbrace{2\sum\limits_{i\in I}z_im_{i,i}
+xn_0
+\sum\limits_{i\in I}(x+iy-iz_i)n_i}_{\geq 0}\\
&=& 
\left(\sum\limits_{i\in I_0}n_i\right)x
+\left(\sum\limits_{i\in I}in_i\right)y
+\sum\limits_{i\in I}\left(2m_{i,i}+\sum\limits_{j\in I:j<i}m_{j,i}+\sum\limits_{j\in I:j>i}m_{i,j}-in_i\right)z_i\\
&=& nx+2my.
\end{eqnarray*}

\begin{proof}[Proof of Theorem \ref{theorem1}]
Let $G$ be as in the statement of Theorem \ref{theorem1}.

We claim that $\left(x,y,(z_i)_{i\in I}\right)$ with
\begin{eqnarray*}
x &=& d(d+1),\\
y &=& \frac{1}{2\Delta}\left(\Delta^2-(2d+1)\Delta-d^2-d\right),\mbox{ and}\\
z_i&=& \frac{1}{i}x+y\mbox{ for $i\in I$}
\end{eqnarray*}
is a feasible solution for $(D)$.

Clearly, we have $x\geq 0$ and $x+iy\geq iz_i$ for every $i\in I$. 
Since $z_i$ is decreasing in $i$,
we obtain
$$z_i\geq z_{\Delta}
=\frac{1}{2\Delta}(\Delta-1-d)(\Delta-d)\geq 0\mbox{ for every $i\in I$}.$$
Furthermore, the constraint 
$z_i+z_j \geq j-i$ is satisfied for every $i,j\in I$ with $i<j$ 
if and only if 
$z_i+z_\Delta \geq \Delta-i$ for every $i\in I\setminus \{ \Delta\}$, 
which --- using the values of $x$, $y$, and the $z_i$ --- is equivalent to the true statement
\begin{eqnarray*}
z_i+z_\Delta-(\Delta-i)=\frac{(d+1-i)(d-i)}{i}\geq 0.
\end{eqnarray*}
Altogether, it follows that $\left(x,y,(z_i)_{i\in I}\right)$ is a feasible solution of $(D)$.

Now,
\begin{eqnarray*}
irr(G) &\leq & {\rm OPT}(P)\\
&\leq& {\rm OPT}(D)\\
& \leq & nx+2my\\
&=& d(d+1)n+\frac{1}{\Delta}\left(\Delta^2-(2d+1)\Delta-d^2-d\right)m,
\end{eqnarray*}
which completes the proof.
\end{proof}

\begin{proof}[Proof of Corollary \ref{corollary1}]
The definition of $d$ in the statement of Theorem \ref{theorem1} implies 
$d\in D$ for the interval $D=\left[\frac{\Delta m}{\Delta n-m}-1,\frac{\Delta m}{\Delta n-m}\right]$.
By Theorem \ref{theorem1}, the function 
$$f:D\to \mathbb{R}:x\mapsto x(x+1)n+\frac{1}{\Delta}\left(\Delta^2-(2x+1)\Delta-x^2-x\right)m$$
satisfies 
$irr(G)\leq f(d)=\underbrace{\left(n-\frac{m}{\Delta}\right)}_{>0}d^2
-\left(2m-n+\frac{m}{\Delta}\right)d+m(\Delta-1)$.

Since $f$ is a quadratic polynomial in $d$ with positive coefficient for $d^2$,
we obtain
$$irr(G)\leq \max\left\{ f(x):x\in D\right\}=\max\left\{f\left(\frac{\Delta m}{\Delta n-m}-1\right),f\left(\frac{\Delta m}{\Delta n-m}\right)\right\}
=\frac{(\Delta n-2m)\Delta m}{\Delta n-m},$$
in fact, the two extreme points of the interval $D$ yield the same $f$-value.

Considered as a function of $m$, the term $\frac{(\Delta n-2m)\Delta m}{\Delta n-m}$
is maximized for
$m^*=\frac{\Delta n}{2+\sqrt{2}}$, which implies 
$$irr(G)\leq \frac{(\Delta n-2m)\Delta m}{\Delta n-m}
<\left(3-2\sqrt{2}\right)\Delta^2 n.$$
Note that the final inequality is strict because $m^*$ is not an integer.
\end{proof}

\begin{proof}[Proof of Proposition \ref{proposition1}]
Let $G$ be as in the statement of Proposition \ref{proposition1}
and let $G$ have $m$ edges.

Let $I_0=\{ \delta,\ldots,\Delta\}$ and $I=I_0\setminus \{ 0\}$,
and consider $(P)$ and $(D)$ as above 
for these modified index sets capturing the degree constraints.

We claim that $\left(x,y,(z_i)_{i\in I}\right)$ with
\begin{eqnarray*}
x &=& \frac{\Delta\left(\Delta-\delta^*\right)\delta^*}{\Delta+\delta^*},\\
y &=& 0,\mbox{ and}\\
z_i&=& \frac{1}{i}x\mbox{ for $i\in I$}
\end{eqnarray*}
is a feasible solution for $(D)$.
Clearly, $x$ and the $z_i$ are non-negative.
Again, since $z_i$ is decreasing in $i$, the constraint 
$z_i+z_j \geq j-i$ is satisfied for every $i,j\in I$ with $i<j$ if and only if 
$z_i+z_\Delta \geq \Delta-i$ for every $i\in I\setminus \{ \Delta\}$, 
which 
--- using the definition of $\delta^*$ as well as the values of $x$, $y$, and the $z_i$ --- 
is equivalent to the true statement
$x\geq \frac{\Delta\left(\Delta-i\right)i}{\Delta+i}.$
As for Theorem \ref{theorem1}, this completes the proof.
\end{proof}

\begin{proof}[Proof of Proposition \ref{proposition2}]
Let $G$ be as in the statement of Proposition \ref{proposition2}.

Let $I_0=I=\{ \delta,\ldots,\Delta\}$
and consider $(P)$ as above 
for these modified index sets capturing the degree constraints.
Since there is no more variable ``$n_0$'', 
within the dual $(D)$ of $(P)$,
the constraint ``$x\in \mathbb{R}_{\geq 0}$''
is replaced with the constraint ``$x\in \mathbb{R}$'',
that is, the variable $x$ may now assume negative values.

We claim that $\left(x,y,(z_i)_{i\in I}\right)$ with
\begin{eqnarray*}
x &=& -\delta\Delta,\\
y &=& \Delta,\mbox{ and}\\
z_i&=& \frac{1}{i}x+y\mbox{ for $i\in I$}
\end{eqnarray*}
is a feasible solution for $(D)$.

Clearly, $z_i=\left(1-\frac{\delta}{i}\right)\Delta$ is non-negative for $i\geq \delta$.
Since $z_i$ is increasing in $i$, the constraint 
$z_i+z_j \geq j-i$ is satisfied for every $i,j\in I$ with $i<j$ if and only if 
$z_\delta+z_j \geq j-\delta$ for every $j\in I\setminus \{ \delta\}$, 
which is equivalent to the true statement
$$z_\delta+z_j-(j-\delta)=\frac{(j-\delta)(\Delta-j)}{j}\geq 0.$$
As for Theorem \ref{theorem1}, this completes the proof.
\end{proof}
In many cases, 
the extremal graphs that we described for our results 
can be made connected by edge swaps,
that is, by replacing edges $uv$ and $u'v'$ in different components 
with the edges $uv'$ and $u'v$.
It seems possible to characterize all extremal graphs 
exploiting the complementary slackness conditions for $(P)$ and $(D)$.

\end{document}